\newcounter{Scounter}
\newtheorem{thm}{Theorem}
\newtheorem{cor}[thm]{Corollary}
\newtheorem{prop}[thm]{Proposition}
\newtheorem{definition}[thm]{Definition}
\newtheorem{lemma}[thm]{Lemma}
\newtheorem{ques}[thm]{Question}
\newtheorem{fact}[thm]{Fact}
\newcommand{\PROOF}[1]{\medbreak\noindent\textbf{Proof #1.}\ }
\newcommand{\qed}{{$\quad\square$\vs{3.6}}}
\newcommand{\vs}[1]{\vspace*{#1 mm}}
\def\thanks#1{%
   \footnotemark
   \edef\@tempa{\noexpand\noexpand\noexpand\footnotetext[\the\c@footnote]}%
   \toks@\expandafter{\@thanks}%
   \toks\tw@{{#1}}
   \xdef\@thanks{\the\toks@\@tempa\the\toks\tw@}}
\begin{document}

\title{On Homeomorphically Irreducible Spanning Trees in Cubic Graphs}

\author{
Arthur Hoffmann-Ostenhof\thanks{Technical University of Vienna,
Austria.
Email: {\tt arthurzorroo@gmx.at}}\thanks{This work was supported by the Austrian Science Fund (FWF): P 26686.},
Kenta Noguchi\thanks{Department of Mathematics, 
Tokyo Denki University, 
5 Senjuasahicho, Adachi-ku, Tokyo 120-8551, Japan.
Email: {\tt noguchi@mail.dendai.ac.jp}},
and Kenta Ozeki\thanks{Environment and Information Sciences, Yokohama National University, 79-2 Tokiwadai,
Hodogaya-ku, Yokohama 240-8501, Japan.
Email: {\tt ozeki-kenta-xr@ynu.ac.jp}}\thanks{
This work was in part supported 
by JSPS KAKENHI Grant Number 25871053.}
}
\date{}
\maketitle

\begin{abstract}
A spanning tree without a vertex of degree two is called a Hist which is an abbreviation for homeomorphically irreducible spanning tree. We provide a necessary condition 
for the existence of a Hist in a cubic graph.
As one consequence, we answer affirmatively an open question on Hists
by Albertson, Berman, Hutchinson and Thomassen.
\end{abstract}

\noindent
{\bf Keywords:} 
Hist, cubic graph, cyclic edge-connectivity, bipartite, spanning tree, fullerene

\section{Introduction}

All graphs considered here are finite and simple. In a connected graph $G$, a spanning tree which does not have a vertex of degree two is called a \textit{homeomorphically irreducible spanning tree},
or abbreviated a \textit{Hist}.
Several conditions which ensure the existence of a Hist in a graph are known,
see for instance \cite{ABHT, CS, Hill}.
In this paper, we only consider Hists in cubic graphs. For an integer $k$, a connected cubic graph $G$ which contains two disjoint cycles is said to be \textit{cyclically $k$-edge-connected} if deleting any set of at most $k-1$ edges from $G$ does not separate $G$ into two components both of which have a cycle.
The following question was asked in \cite[p.~253]{ABHT}.

\begin{ques}
\label{ABHTques}
Does there exist a cyclically $k$-edge-connected cubic graph without a Hist for each positive integer $k$?
\end{ques}

Note that every Hist $T$ in a cubic graph has only vertices of degree one and three. Hence $E(G)$ has a partition into 
$E(T)$ and the edge set of a union of disjoint cycles. 

Let us call a $2$-regular subgraph $H$ of a connected graph $G$ \textit{non-separating}
if $G-E(H)$ is connected.
For a set $S$ of edges in $G$,
we denote by $\langle S \rangle$ 
the subgraph of $G$ induced by the edges in $S$.
So, the vertex set of $\langle S \rangle$ is the set of end vertices of edges in $S$.
We answer Question 1 
by applying Corollary \ref{HIST}, a corollary of Theorem \ref{general} which turns out to be useful for proving that certain cubic graphs do not have a Hist.

\begin{thm}
\label{general}
Let $G$ be a cubic graph with a Hist $T$, and
let $H= \langle E(G) - E(T)\rangle$. Then $H$ is a non-separating $2$-regular subgraph of $G$
satisfying $|V(H)|= |V(G)|/2 +1 $.
\end{thm}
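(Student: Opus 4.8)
The plan is to read everything off from the degree sequence of the Hist $T$ together with the fact that $T$ is a spanning tree, so that all three assertions reduce to elementary bookkeeping. First I would dispose of the non-separating property, which is essentially definitional. By construction the edges of $H$ are exactly $E(G)-E(T)$, so deleting them from $G$ leaves a graph with vertex set $V(G)$ and edge set $E(T)$; that is, $G-E(H)$ is precisely the spanning tree $T$, which is connected. Hence $H$ is non-separating.

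Next I would establish that $H$ is $2$-regular by a local count at each vertex. Since $G$ is cubic, $\deg_G(v)=3$ for every $v$, and since $T$ is a Hist we have $\deg_T(v)\in\{1,3\}$. The number of edges of $H$ incident with $v$ is $\deg_G(v)-\deg_T(v)=3-\deg_T(v)$, which equals $0$ when $\deg_T(v)=3$ and equals $2$ when $\deg_T(v)=1$. Consequently $V(H)$ is exactly the set of leaves of $T$, and every such vertex has degree $2$ in $H$; thus $H$ is $2$-regular.

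Finally I would compute $|V(H)|$. Write $n=|V(G)|$ and let $L$ and $I$ denote the numbers of degree-$1$ and degree-$3$ vertices of $T$, so that $L+I=n$ and, since $V(H)$ consists exactly of the leaves, $|V(H)|=L$. Applying the handshake identity to the spanning tree $T$, which has $n-1$ edges, gives $L+3I=2(n-1)$. Eliminating $I$ between the two relations yields $2I=n-2$ and hence $L=n-I=n/2+1$, as claimed.

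I do not expect a genuine obstacle here: each part is essentially a one-line consequence of the Hist and cubic hypotheses. The only point needing care is the bookkeeping that identifies $V(H)$ with the leaf set of $T$ --- in particular, verifying that no internal (degree-$3$) vertex of $T$ survives in $H$ --- after which the count via the handshake lemma is forced.
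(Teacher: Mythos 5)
Your proof is correct and follows essentially the same route as the paper: identify $G-E(H)=T$ for the non-separating property, observe that $V(H)$ is exactly the leaf set of $T$ with every leaf having degree $2$ in $H$, and then count leaves by degree bookkeeping. The only cosmetic difference is that you run the handshake computation $L+3I=2(n-1)$ directly on the spanning tree, whereas the paper quotes the equivalent tree fact $t_1=t_3+2$ (which it notes also follows from the Handshaking Lemma).
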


\begin{proof}
Let $G$ be a cubic graph with a Hist $T$
and let $H= \langle E(G) - E(T)\rangle$. 
Since $V(H)$ is the set of all leaves of $T$
and $G - E(H) = T$,
$H$ is a non-separating $2$-regular subgraph of $G$.

Let $t_1$ be the number of leaves in $T$,
and let $t_3$ be the number of vertices of degree $3$ in $T$.
Since $T$ is a Hist, we have $t_1 + t_3 = |V(G)|$.
On the other hand,
it is easy to see that $t_1 = t_3 + 2$.
(This can be obtained straightforwardly
by the Handshaking Lemma or by induction.
For example, see \cite[Exercise 2.1.23 on p.~70]{West}.)
Therefore,
$|V(G)| = 2 t_1 -2$. 
Since $V(H)$ is the set of all leaves of $T$,
we have $|V(H)|= t_1$.
By using the above equations the proof is completed.\qed 
\end{proof}



\begin{cor}
\label{HIST}
Let $G$ be a bipartite cubic graph. If $G$ has a Hist, 
then $|V(G)| \equiv \ 2  \,\,(\text{mod} \  4)$.
\end{cor}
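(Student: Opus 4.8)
The plan is to read off the essential structural fact from Theorem \ref{general} and then exploit bipartiteness to control a parity. Suppose $G$ is a bipartite cubic graph possessing a Hist $T$, and set $H = \langle E(G) - E(T)\rangle$ as in the theorem. Since $G$ is cubic, $|V(G)|$ is even, so $|V(G)|/2$ is an integer and the quantity $|V(G)|/2 + 1$ is well defined. By Theorem \ref{general}, $H$ is a $2$-regular subgraph of $G$ with
\[
|V(H)| = \frac{|V(G)|}{2} + 1 .
\]
Thus the whole problem reduces to pinning down the parity of $|V(H)|$.

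The key observation is that $H$, being $2$-regular, is a vertex-disjoint union of cycles, and each of these cycles is a cycle of $G$. Because $G$ is bipartite it contains no odd cycle, so every component of $H$ is an even cycle and therefore contributes an even number of vertices. Summing over the components, $|V(H)|$ is even. I would state this as the single substantive step, since everything else is either quoted from Theorem \ref{general} or elementary arithmetic.

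Finally I would combine the two facts. From $|V(H)| = |V(G)|/2 + 1$ being even we deduce that $|V(G)|/2$ is odd, hence $|V(G)|/2 = 2k+1$ for some integer $k$ and $|V(G)| = 4k + 2 \equiv 2 \pmod{4}$, as claimed. I do not anticipate a genuine obstacle here: the only content beyond Theorem \ref{general} is the remark that a bipartite graph forces all cycles of the $2$-factor-like subgraph $H$ to be even, and the rest is a one-line parity computation. If anything needs care, it is merely making explicit why $H$ decomposes into cycles (its $2$-regularity) and why those cycles live inside $G$ (so bipartiteness applies to them).
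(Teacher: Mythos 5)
Your proposal is correct and follows exactly the paper's argument: invoke Theorem \ref{general} to get the $2$-regular subgraph $H$ with $|V(H)| = |V(G)|/2 + 1$, use bipartiteness to conclude $|V(H)|$ is even (since $H$ is a disjoint union of even cycles), and finish with the parity computation. You merely make explicit the step that the paper leaves implicit, namely why bipartiteness forces $|V(H)|$ to be even.
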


\begin{proof}
Let $G$ be a bipartite cubic graph with a Hist.
By Theorem \ref{general},
$H= \langle E(G) - E(T)\rangle$ is a non-separating $2$-regular subgraph of $G$
satisfying $|V(H)|= |V(G)|/2 +1 $.
Since $G$ is bipartite,
$|V(H)|$ is even
and hence $|V(G)| \equiv \ 2  \,\,(\text{mod} \  4)$.
\qed

\end{proof}

\noindent
\textbf{Remark:}
Corollary \ref{HIST} implies that
no bipartite cubic graph $G$ 
with $|V(G)| \equiv \ 0  \,\,(\text{mod} \  4)$
has a Hist.
However,
if $G$ is a bipartite cubic graph
with $|V(G)| \equiv \ 2  \,\,(\text{mod} \  4)$, 
then $G$ may or may not have a Hist.
Both cases could happen, see Section \ref{pcHsec}.  
\\

Now we obtain a positive answer to Question \ref{ABHTques} by applying Corollary \ref{HIST}
together with the following proposition.

\begin{prop} \label{constlemma}
For every positive integer $k$,
there exists a cyclically $k$-edge-connected bipartite cubic graph $G$
such that $|V(G)| \equiv \ 0  \,\,(\text{mod} \  4)$.
\end{prop}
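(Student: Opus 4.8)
The plan is to isolate the one hard requirement. Being bipartite and having order divisible by $4$ are soft conditions that can be built into the construction, whereas cyclic $k$-edge-connectivity is the real content, and it cannot be obtained from large girth alone: gluing two high-girth cubic graphs, each with one edge deleted, along a pair of new edges produces a high-girth cubic graph with a cyclic $2$-edge-cut. I would therefore work with \emph{symmetric} graphs, using the classical fact that a connected cubic vertex-transitive graph (apart from finitely many small exceptions) has cyclic edge-connectivity $\lambda_c$ equal to its girth (Nedela and \v{S}koviera); since a shortest cycle already yields a cyclic edge-cut of that size, only the lower bound $\lambda_c \ge g$ is needed here. This reduces the proposition to producing, for each $k$, a connected bipartite cubic vertex-transitive graph of girth at least $k$ whose order is divisible by $4$.

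Such graphs I would realise as Cayley graphs. Let $\Gamma = \langle s, t \mid t^2 = 1 \rangle \cong \mathbb{Z} * \mathbb{Z}_2$; its Cayley graph with connection set $\{s, s^{-1}, t\}$ is an infinite cubic tree, so it contains no cycles at all. As $\Gamma$ is residually finite, for each $k$ there is a finite-index normal subgroup $N$ avoiding every nontrivial element represented by a reduced word of length less than $k$; the finite quotient $\Gamma/N$ then yields a connected cubic vertex-transitive graph $F$ of girth at least $k$ (the condition on short words also rules out loops and multiple edges). To make $F$ bipartite I would factor through the parity homomorphism $\phi \colon \Gamma \to \mathbb{Z}_2$, $s, t \mapsto 1$ (well defined since $t^2 \mapsto 0$), by arranging $N \le \ker \phi$, so that $\phi$ induces a proper $2$-colouring of $F$. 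Finally, to force $|V(F)| = 2\,|\ker\phi / N| \equiv 0 \pmod 4$, I would additionally require $N$ to lie in the kernel of a surjection $\ker\phi \to \mathbb{Z}_2$, which exists because the finite-index subgroup $\ker\phi$ of $\Gamma$ is infinite and finitely generated. These are finitely many finite-index conditions, so a suitable normal $N$ exists (one may take a normal core of their intersection).

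With $F = F_k$ in hand, the cited equality gives $\lambda_c(F_k) = g(F_k) \ge k$, so $F_k$ is cyclically $k$-edge-connected, bipartite, cubic, and of order divisible by $4$, as required. The main obstacle is precisely the symmetric input underlying this plan: establishing the existence of cubic vertex-transitive graphs of arbitrarily large girth and invoking (or reproving) the identity $\lambda_c = g$ for such graphs. Two remarks on robustness: the order condition can instead be obtained for free by passing to the bipartite double cover $F \times K_2$ of a non-bipartite base, whose order $2\,|V(F)|$ is automatically divisible by $4$ and which remains cubic and vertex-transitive; and if one prefers to avoid symmetry altogether, a uniformly random bipartite cubic graph on two parts of size $2m$ is asymptotically almost surely cyclically $k$-edge-connected, giving the desired graphs of order $4m$ for all large $m$, at the cost of a standard but nontrivial estimate on small cyclic cuts.
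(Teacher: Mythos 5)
Your proposal is correct, but it takes a genuinely different route from the proof the paper actually presents --- indeed, it is a fleshed-out version of the alternative argument that the paper itself sketches immediately after stating the proposition (and attributes to a referee), and then deliberately sets aside because it ``requires several algebraic tools.'' You reduce the problem to constructing connected bipartite cubic \emph{vertex-transitive} graphs of girth at least $k$ and order divisible by $4$, and let the Nedela--\v{S}koviera theorem ($\lambda_c = $ girth for cubic vertex-transitive graphs, up to small exceptions) do all the connectivity work; your realization of such graphs as Cayley graphs of finite quotients $\Gamma/N$ of $\Gamma = \mathbb{Z} * \mathbb{Z}_2$ is sound: residual finiteness of $\Gamma$ gives girth at least $k$, the condition $N \le \ker\phi$ (parity map) gives bipartiteness, and the extra index-$2$ condition inside $\ker\phi$ forces $4 \mid [\Gamma : N]$, with the normal core handling normality. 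The paper instead stays combinatorial: it cites the existence of a $k$-connected $4k$-regular graph $H$ of girth at least $k$ (Fact \ref{constfact}), proves that every inflation of a $k$-connected graph of girth at least $k$ is cyclically $k$-edge-connected (Theorem \ref{inflation}), and obtains a \emph{bipartite} cubic inflation with $4k|V(H)| \equiv 0 \pmod 4$ vertices from an Eulerian orientation of $H$ (Lemma \ref{bip}). What your approach buys is economy --- symmetry plus one cited theorem replaces the whole inflation analysis; what the paper's approach buys is elementarity (no group theory) and a reusable construction method for cubic bipartite graphs of high cyclic edge-connectivity, which the authors present as a contribution in its own right.

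One repair is needed in your order-divisibility step: the justification that a surjection $\ker\phi \to \mathbb{Z}_2$ exists ``because $\ker\phi$ is infinite and finitely generated'' is not a valid general principle (infinite finitely generated groups may have no subgroup of index $2$, or indeed no proper finite-index subgroups at all). The claim is nevertheless true here, since $\ker\phi$ is free of rank $2$ (by Reidemeister--Schreier, or by Kurosh: it is a torsion-free finite-index subgroup of $\mathbb{Z} * \mathbb{Z}_2$). Simpler still, replace your two conditions $N \le \ker\phi$ and $N \le \ker\psi$ by the single condition $N \le \ker\chi$, where $\chi \colon \Gamma \to \mathbb{Z}_2 \times \mathbb{Z}_2$ is defined by $s \mapsto (1,1)$, $t \mapsto (1,0)$: this kernel is normal in $\Gamma$ of index $4$ and is contained in $\ker\phi$, so it yields bipartiteness and $|V(\Gamma/N)| \equiv 0 \pmod 4$ simultaneously.
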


Proposition \ref{constlemma} can be directly proved
by considering transitive graphs:
it is known that
for any positive integer $k$,
there are infinitely many vertex-transitive 
bipartite cubic graphs $G$
of girth at least $k$
with $|V(G)| \equiv \ 0  \,\,(\text{mod} \  4)$, see for example \cite{NS3}.
Since the cyclic edge-connectivity of vertex-transitive graph
is equal to its girth (see \cite{NS2}), Proposition \ref{constlemma} holds.
However, since this proof requires several algebraic tools,
we prefer to present an elementary proof which also offers
a new method to construct cubic bipartite graphs
with high cyclic edge-connectivity, see Theorem \ref{inflation} and Lemma \ref{bip} in Section \ref{proofsec}.

In Section \ref{pcHsec},
we show other application of Theorem \ref{general} to plane and toroidal cubic graphs.



\section{Proof of Proposition \ref{constlemma}}
\label{proofsec}

In order to prove Proposition \ref{constlemma},
we use the following fact which can be proved in several ways, for instance, by the probabilistic method (see \cite[Theorems 2.5 and 2.10]{Wormald}) and by the constructive method (see \cite{Egawa}).

\begin{fact}
\label{constfact}
For every positive integer $d$,
there exists a $d$-connected $4d$-regular graph
of girth at least $d$.
\end{fact}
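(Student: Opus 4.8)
The plan is to prove this existence statement by the probabilistic method, using the configuration (pairing) model for random regular graphs. Fix $d$, set $r = 4d$, and let $n$ be a large integer (with $rn$ even, which is automatic here since $rn = 4dn$). I would sample a random $r$-regular multigraph $G_n$ on $n$ vertices by attaching $r$ half-edges to each vertex and choosing a uniformly random perfect matching on the resulting $rn$ half-edges. The goal is to show that, for $n$ large, with positive probability $G_n$ is simultaneously simple, of girth at least $d$, and $d$-connected; any realisation of such an event is then the desired graph.

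First I would control short cycles. The standard Poisson-approximation theorem for the configuration model states that, as $n \to \infty$, the counts $X_1, X_2, \dots, X_{d-1}$ of loops, multi-edges, and cycles of each length $\ell \le d-1$ converge jointly in distribution to independent Poisson random variables with means $\lambda_\ell = (r-1)^\ell / (2\ell)$. In particular the event that $G_n$ contains none of these configurations --- equivalently, that $G_n$ is simple with girth at least $d$ --- has probability tending to the positive constant $\prod_{\ell=1}^{d-1} e^{-\lambda_\ell} =: c_d > 0$. Hence $\mathbb{P}(\mathrm{girth}(G_n) \ge d) \ge c_d/2$ for all sufficiently large $n$.

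Next I would invoke the classical fact that, for a fixed degree $r \ge 3$, a random $r$-regular graph is $r$-connected with high probability; since $r = 4d \ge d$, this yields $\mathbb{P}(G_n \text{ is } d\text{-connected}) \to 1$. Combining the two estimates, for $n$ large the intersection of the girth event (probability $\ge c_d/2$) with the connectivity event (probability $\to 1$) still has probability at least $c_d/4 > 0$, so a simple $d$-connected $4d$-regular graph of girth at least $d$ exists, as claimed.

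The main obstacle is not the combination step --- which only needs that a positive-probability event and a high-probability event can intersect --- but the two inputs themselves: the Poisson limit for short-cycle counts and the whp connectivity of random regular graphs. Both are standard (they are exactly the results cited from \cite{Wormald}), and I would treat them as black boxes, since re-deriving them would constitute the bulk of a self-contained argument. One subtlety worth stressing is that the girth condition holds only with constant, not high, probability, so a naive union bound over all the desired properties fails; this is precisely why one must keep the positive-probability event and the high-probability event separate. Alternatively, one could avoid all probabilistic estimates by giving an explicit construction (for instance via graph lifts or Cayley graphs, as in the cited constructive route \cite{Egawa}), at the cost of more algebraic machinery.
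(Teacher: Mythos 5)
Your proposal is correct and follows exactly the route the paper itself indicates: the paper gives no proof of this fact, merely citing the probabilistic method (Theorems 2.5 and 2.10 of \cite{Wormald}, which are precisely the Poisson limit for short-cycle counts and the whp $r$-connectivity you invoke) and the constructive method of \cite{Egawa}. Your fleshed-out combination of the positive-probability girth event with the high-probability connectivity event is sound, so this matches the paper's intended argument.
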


Then we apply the well known concept of an \textit{inflation} (see for instance \cite{FJ}):

\begin{definition}\label{infl}
Let $H$ be a graph and let $G$ be a cubic graph. Then $G$ is called an \textit{inflation} of $H$ if $G$ contains a 
2-factor $F$ consisting of chordless cycles such that the graph obtained from $G$ by contracting each cycle of $F$ to a vertex is isomorphic to $H$.
\end{definition}

If the minimum degree of $H$ is at least $3$, 
then obviously an inflation of $H$ exists,
since one obtains (informally speaking) an inflation of $H$ by expanding every vertex of $H$ to a cycle.
The next theorem guarantees the high cyclic edge-connectivity for each inflation of graphs with high connectivity and girth.

\begin{thm}\label{inflation}
Let $k \geq 3$ and let $H$ be a $k$-connected 
graph with girth at least $k$. Then every inflation of $H$ is cyclically $k$-edge-connected.
\end{thm}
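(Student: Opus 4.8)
The plan is to fix an inflation $G$ of $H$ and exploit the rigid structure it inherits from Definition \ref{infl}. Write $F=\{C_v : v\in V(H)\}$ for the $2$-factor of chordless cycles and let $M=E(G)\setminus E(F)$ be the complementary perfect matching; contracting each $C_v$ identifies $M$ with $E(H)$, so each $C_v$ has length $\deg_H(v)\ge k$ (a $k$-connected graph has minimum degree at least $k$) and, crucially, since $H$ is simple there is at most one matching edge joining any two of the cycles. I would argue by contradiction: suppose some edge set $S$ with $|S|\le k-1$ separates $G$ into components $G_1,G_2$ each containing a cycle. Split $S$ into its $s_c$ cycle-edges and $s_m$ matching-edges, so $s_c+s_m\le k-1$, and classify each $v\in V(H)$ according to whether $C_v$ lies wholly in $G_1$ (put $v$ in $A$), wholly in $G_2$ (put $v$ in $B$), or is split between them (put $v$ in $X$), giving a partition $V(H)=A\sqcup B\sqcup X$.

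Two easy counting bounds come first. Disconnecting a cycle requires deleting at least two of its edges, so each $v\in X$ contributes at least two cycle-edges to $S$, whence $|X|\le s_c/2$. And every edge of $H$ running between $A$ and $B$ corresponds to a matching edge from $G_1$ to $G_2$, hence lies in $S$, so there are at most $s_m$ such edges. The step I expect to be the main obstacle is proving that $A$ and $B$ are both nonempty, i.e.\ that each of $G_1,G_2$ contains an entire cycle $C_v$ and not merely some cycle. This is exactly where the girth hypothesis is needed: a cycle of $G_1$ that is not one of the $C_v$ must use matching edges, and the matching edges it uses, read off in $H$, contain a cycle of $H$ of length at least $k$. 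The reason is that the $G$-cycle projects to a closed walk in $H$ which is nonbacktracking (nonbacktracking because $H$ is simple, so no two matching edges join the same pair of cycles), and a forest admits no nonbacktracking closed walk of positive length. If $A$ were empty, every cycle met by this $G_1$-cycle would be split, so those $\ge k$ distinct vertices of $H$ would all lie in $X$, contradicting $|X|\le s_c/2\le (k-1)/2<k$. Hence $A\ne\emptyset$, and symmetrically $B\ne\emptyset$.

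With $a_0\in A$ and $b_0\in B$ in hand, I would finish by invoking the vertex-connectivity of $H$ directly. By Menger's theorem there are at least $k$ internally disjoint $a_0, b_0$-paths in $H$. Each such path starts in $A$ and ends in $B$, so it either passes through a vertex of $X$ or uses an edge of $H$ joining $A$ to $B$. Since the paths are internally disjoint, the $X$-vertices they use are distinct and the edges between $A$ and $B$ they use are distinct; assigning each path to whichever resource it consumes yields $k\le |X|+s_m\le s_c/2+s_m\le s_c+s_m\le k-1$, a contradiction. Therefore no such $S$ exists, and $G$ is cyclically $k$-edge-connected. Two minor points to record along the way are that $G$ does contain two disjoint cycles, so that the notion applies (any two of the $C_v$ suffice), and that the form of Menger's theorem used remains valid when $a_0b_0\in E(H)$, in which case that edge is itself counted as one of the $k$ paths.
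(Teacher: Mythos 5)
Your proof is correct, and it rests on the same two pillars as the paper's: projecting the hypothetical small cyclic edge cut down to $H$, and playing the $k$-connectivity of $H$ (via Menger) off against the girth hypothesis. But the organization is genuinely different. The paper argues by cases on whether both projected components $D_i^H$ contain a vertex $x$ whose cycle $C_x$ is untouched by $S$: when one does not, the girth argument (a cycle of $D_1$ must be transverse, hence projects to a closed trail of $H$ containing an $H$-cycle of length $\ge k$) gives the contradiction, and when both do, Menger is invoked in a mixed vertex-plus-edge deletion form, since removing the at most $k-1$ objects in $S^H_V \cup S^H_E$ disconnects $H$. You instead use the girth/projection step affirmatively, to prove $A, B \neq \emptyset$ — your nonbacktracking-walk lemma is the same device as the paper's closed-trail observation — and then apply the plain vertex form of Menger between two concrete vertices $a_0, b_0$, charging each of the $k$ internally disjoint paths to a distinct $X$-vertex or a distinct $A$--$B$ edge. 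This buys several things: you never need the paper's reduction to $S$ being a minimal set (a matching), you avoid the mixed form of Menger (which, as used in the paper, implicitly requires the standard justification that edge deletions can be traded for vertex deletions once both sides retain a vertex), and your counting even has slack — the factor-of-two saving $|X| \le s_c/2$ is unnecessary, since $k \le |X| + s_m \le s_c + s_m \le k-1$ already yields the contradiction. What the paper's version buys in exchange is brevity: it never has to verify that each side of the cut contains an entire cycle $C_v$, only that it contains some cycle.
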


\begin{proof}
Let $G$ be an inflation of $H$.
For each vertex $x \in V(H)$, 
denote the unique cycle of $F$ (as in Definition \ref{infl}) in $G$
corresponding to $x$ by $C_x$.
We say that
a cycle $C$ in $G$ is \textit{transverse}
if there are two distinct vertices $x_1$ and $x_2$ in $H$
with $V(C_x) \cap V(C) \not= \emptyset$ for each $x \in \{x_1,x_2\}$.
Otherwise $C$ is said to be \textit{non-transverse},
that is, $C = C_x$ for some vertex $x \in V(H)$.

Suppose by contradiction that $G$ is not cyclically $k$-edge-connected.
Then $G$ has a set $S$ of edges with
$|S| \leq k-1$ such that $G - S$ 
has precisely two components $D_1$ and $D_2$ both having a cycle.
By taking such a set $S$ as small as possible,
we may assume that $S$ is a matching.

For $i \in \{1,2\}$,
let $D_i^H$ be the subgraph of $H$
induced by the vertex set $$\{ x \in V(H) : V(C_x) \cap V(D_i) \not= \emptyset\}\,.$$
So, $D_1^H$ is obtained from $D_1$ in the following way: 
for each $x \in V(H)$ such that $C_x \cap D_1$ is not the null graph,
where $C_x \cap D_1$ is the maximum common subgraph of $C_x$ and $D_1$,
contract $C_x \cap D_1$ into one vertex and delete all resultant loops.
%
\begin{eqnarray*}
\text{Let} \qquad
S_{\text{V}}^H &=& \{x \in V(H) : E(C_x) \cap S \not=\emptyset\},\\
\text{and} \qquad
S_{\text{E}}^H &=& S \cap E(H) \\
&=& \{e \in S : e \not\in  E(C_x) \ \text{for any $x \in V(H)$}\}.
\end{eqnarray*}
%
Note that $|S^H_V| + |S^H_E| \ \leq \ |S| \ \leq \ k-1$.

Suppose that $V(D_i^H) - S^H_{\text{V}} \not= \emptyset$ for each $i \in \{1,2\}$.
Then $H - S^H_{\text{V}} - S^H_{\text{E}}$ have two components $D_1^H$ and $D_2^H$.
In this case, the number of vertex disjoint paths from a vertex of $D_1^H$ to a vertex of $D_2^H$
is at most $|S^H_{\text{V}}| + |S^H_{\text{E}}| \leq k-1$,
which contradicts by Menger's Theorem that 
$H$ is $k$-connected. 


Therefore,
we may assume without loss of generality that 
$V(D_1^H) - S^H_{\text{V}} = \emptyset$.


Note that $D_1$ contains by assumption a cycle, say $C_1$,
and 
$C_1$ must be transverse (otherwise, $C_1 = C_x$ for some $x \in V(D^H_1)-S^H_{\text{V}}$,
but this contradicts that $V(D^H_1)-S^H_{\text{V}} = \emptyset$).
Thus, $C_1$ corresponds to a closed trail in $D^H_1$, say $C^H_1$.
Since the girth of $H$ is at least $k$ and every closed trail contains a cycle,
we have $|V(C^H_1)| \geq k$,
which is a contradiction to 
the fact that
$V(C^H_1) \subseteq V(D^H_1) \subseteq S^H_{\text{V}}$
and $|S^H_{\text{V}}| \leq k-1$. \qed

\end{proof}

Note that the statement of the above theorem does not hold
if $H$ is only demanded to be $k$-edge-connected.

\begin{lemma}\label{bip}
Let $k \geq 2$ and let $H$ be a $2k$-regular graph. 
Then there exists a bipartite cubic inflation of $H$ with $2k|V(H)|$ vertices.
\end{lemma}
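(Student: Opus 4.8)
The plan is to build the inflation explicitly. Each vertex $x$ of the $2k$-regular graph $H$ will be replaced by a cycle $C_x$ whose length equals $\deg_H(x)=2k$, with one vertex of $C_x$ reserved for each edge of $H$ incident to $x$. Concretely, I would identify the vertices of $G$ with the incidences $(x,e)$ where $e$ is an edge of $H$ at $x$, join $(x,e)$ to $(y,e)$ for each edge $e=xy$, and join the $2k$ incidences at each fixed $x$ into a $2k$-cycle $C_x$ in an order to be specified. Each vertex $(x,e)$ then has degree $3$ (two neighbours along $C_x$ and one across $e$), the cycles $C_x$ form a $2$-factor $F$, and contracting each $C_x$ recovers $H$, so $G$ is a cubic inflation of $H$; the vertex count is $\sum_x 2k = 2k|V(H)|$. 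Since every non-cycle edge of $G$ joins distinct cycles, each $C_x$ is automatically chordless, and simplicity of $G$ follows from that of $H$.

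The whole point is to choose the cyclic order on each $C_x$ so that $G$ becomes bipartite. First I would fix an orientation of $H$ in which every vertex has in-degree and out-degree exactly $k$; such an orientation exists because every component of a $2k$-regular graph is Eulerian, and traversing an Eulerian circuit orients the edges so that in- and out-degrees are balanced at each vertex. This splits the $2k$ edges at each vertex $x$ into exactly $k$ out-edges and $k$ in-edges.

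Next I would arrange each cycle $C_x=v_1 v_2\cdots v_{2k}v_1$ so that the $k$ out-edges at $x$ occupy the odd positions $v_1,v_3,\ldots,v_{2k-1}$ and the $k$ in-edges occupy the even positions $v_2,v_4,\ldots,v_{2k}$, which is possible since there are precisely $k$ of each. I then $2$-colour $V(G)$ by assigning colour $A$ to every \emph{out-incidence} and colour $B$ to every \emph{in-incidence}. Along $C_x$ the colours alternate $A,B,A,B,\ldots$, so every cycle edge is bichromatic; and for a cross edge arising from $e=xy$ oriented $x\to y$, the endpoint $(x,e)$ is an out-incidence (colour $A$) while $(y,e)$ is an in-incidence (colour $B$), so this edge is bichromatic as well. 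Hence no edge of $G$ joins two vertices of the same colour, and $G$ is bipartite.

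The main obstacle is precisely pinning down the orientation together with the out/in alternation so that every cross edge automatically links the two colour classes; once the Eulerian orientation is in hand, this alternating placement is the natural device, and the remaining verifications (degree $3$, chordlessness, that the contraction recovers $H$, and the vertex count) are routine. I would close by noting that $k\ge 2$ forces $2k\ge 4$, so each $C_x$ is a genuine cycle and the construction is well defined.
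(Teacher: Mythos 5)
Your proposal is correct and follows essentially the same route as the paper: take an Eulerian orientation of $H$ (balanced in/out degrees at each vertex), expand each vertex into a $2k$-cycle with the $k$ out-edges and $k$ in-edges placed alternately, and conclude bipartiteness from the resulting in/out dichotomy. The only cosmetic difference is that you verify the $2$-colouring (out-incidences versus in-incidences) directly edge by edge, whereas the paper first extends the orientation to the cycle edges so that every vertex gets outdegree $3$ or indegree $3$ and reads off the same colouring from that.
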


\begin{proof} Since every inflation of $H$ has $2k|V(H)|$ vertices,
it suffices to show that $H$ has a bipartite inflation.

Since each component of $H$ is Eulerian, 
it has an Eulerian orientation, that is, the indegree equals the outdegree for every vertex of $H$.
Then we can expand every vertex $x$ of $H$ to
a cycle $C_x$ to obtain an inflation $G$ with the property that
the oriented edges incident with the vertices of $C_x$ are alternately directed
towards and outwards $C_x$.
See Figure \ref{inflationfig}.
Furthermore,
since each cycle $C_x$ is of length exactly $2k$,
it is possible to extend this partial orientation to an orientation of $G$
(by orienting the edges of each cycle $C_x$)
such that every vertex of $G$ has then either outdegree $3$ or indegree $3$.
This shows a 2-coloring of $G$, and hence $G$ is bipartite. \qed
\end{proof}

\begin{figure}
\centering
\input{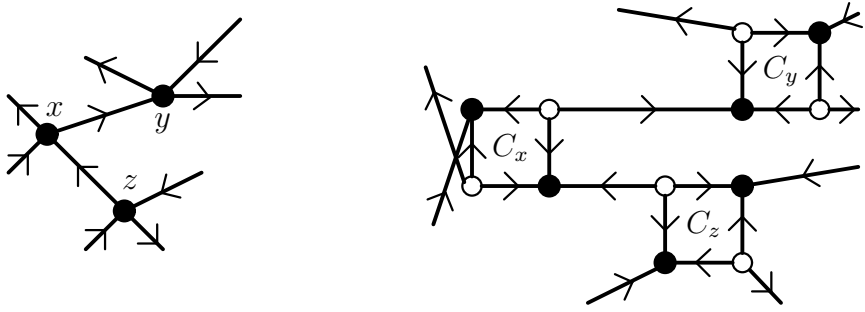}
\caption{A graph $H$ with Eulerian orientation (the left side)
and the bipartite graph $G$ obtained
by an inflation of $H$ (the right side) for the case $k=2$.
In $G$, the vertices with outdegree 3
are represented by white circles,
while the vertices with indegree 3 are represented by black circles.
}
\label{inflationfig}
\end{figure}




\PROOF{of Proposition \ref{constlemma}}
Let $k$ be a positive integer.
By Fact \ref{constfact} there exists a $k$-connected $4k$-regular graph $H$
of girth at least $k$. Since $H$ is $4k$-regular,
it follows from Lemma \ref{bip} that
there exists a bipartite cubic inflation $G$ with $4k|V(H)|$ vertices.
Since $H$ is $k$-connected and has the girth at least $k$, it follows from Theorem \ref{inflation} that $G$ is cyclically $k$-edge-connected,
which completes the proof.
\qed

\section{Hists in plane cubic graphs}
\label{pcHsec}

Let us call a plane cubic graph with a Hist in short a \textit{pcH-graph}. A pcH-graph is by its definition a generalization of a cubic Halin graph (defined in \cite{Hal}) which is a pcH-graph with a Hist such that all the leaves of the Hist induce precisely one cycle. 
It is easy to see that any cubic Halin graph contains a triangle.
In contrast to cubic Halin graphs, pcH-graphs can have girth $4$ or even $5$, see Figure \ref{fullerene_fig}. 
Note that it is NP-complete to determine
whether a plane cubic graph has a Hist, see \cite{Douglas}.
(To be exact, Douglas \cite{Douglas} proved that only for plane graphs of maximum degree at most $3$.
However, replacing each vertex of degree at most $2$ with a certain gadget,
we can easily modify the proof to show the NP-completeness of the Hist problem for plane cubic graphs.)
Since any non-facial cycle of a cubic plane graph is separating,
by restricting Theorem \ref{general} to the planar case we obtain:

\begin{cor}
\label{pH-graph}
Let $G$ be a plane cubic graph with a Hist. 
Then $G$ contains a non-separating $2$-regular subgraph $H$ consisting of facial cycles
such that $|V(H)|=|V(G)|/2 +1 $.
\end{cor}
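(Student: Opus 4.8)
The plan is to apply Theorem~\ref{general} to extract the $2$-regular subgraph and then upgrade it to one made of facial cycles using planarity. First I would fix a Hist $T$ of $G$ and set $H = \langle E(G) - E(T)\rangle$; Theorem~\ref{general} immediately gives that $H$ is a non-separating $2$-regular subgraph with $|V(H)| = |V(G)|/2 + 1$. Being $2$-regular, $H$ is a vertex-disjoint union of cycles $C_1, \dots, C_m$, so the only thing left to prove is that each $C_j$ bounds a face of the embedding.

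The heart of the argument is the claim quoted just before the statement: in a connected plane cubic graph, every non-facial cycle $C$ is separating, i.e.\ $G - E(C)$ is disconnected. I would prove this by a Jordan-curve argument tailored to cubic graphs. Each vertex of $C$ is incident with exactly one edge not on $C$ (its \emph{leaving} edge), drawn either inside or outside the closed curve $C$. If all leaving edges lay on one side, the other side would contain no vertex or edge of the connected graph $G$ and would hence be a face, contradicting non-faciality; so leaving edges occur on both sides, and in particular both the interior and the exterior of $C$ contain vertices of $G$. In $G - E(C)$ every vertex of $C$ has degree $1$ (only its leaving edge survives), so no path can pass through $V(C)$; since any curve from the interior to the exterior must cross $C$, the interior and exterior parts lie in different components. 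Thus $G - E(C)$ is disconnected.

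With this claim in hand, I would finish by contradiction. Suppose some $C_j$ is non-facial; then $G - E(C_j)$ is disconnected. Because $E(C_j) \subseteq E(H)$, the graph $G - E(H)$ is obtained from the already disconnected $G - E(C_j)$ by deleting further edges, and deleting edges can never merge components; hence $G - E(H)$ is disconnected as well. This contradicts the fact, supplied by Theorem~\ref{general}, that $H$ is non-separating. Therefore every $C_j$ is facial, and $H$ is the required non-separating $2$-regular subgraph consisting of facial cycles with $|V(H)| = |V(G)|/2 + 1$.

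I expect the main obstacle to be the separating-cycle claim rather than the bookkeeping: making the Jordan-curve reasoning rigorous in the cubic setting --- in particular arguing that non-faciality forces leaving edges on both sides, and that reducing the $C$-vertices to degree $1$ genuinely severs interior from exterior --- is where the real content lies. The passage from a single separating cycle $C_j$ to the disconnectedness of $G - E(H)$ is then just the observation that edge deletion is monotone on the number of components.
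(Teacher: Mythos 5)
Your overall structure is exactly the paper's: apply Theorem~\ref{general} to get a non-separating $2$-regular subgraph $H$ with $|V(H)|=|V(G)|/2+1$, then argue that every non-facial cycle of a plane cubic graph is separating, so all cycles of $H$ must be facial. (The paper simply asserts this separating-cycle fact, without proof, in the sentence preceding the corollary.) Your final reduction --- $E(C_j)\subseteq E(H)$ and deleting further edges cannot merge components --- is correct. The problem lies in your proof of the separating-cycle claim, at precisely the step you flagged as the crux: ``leaving edges occur on both sides, and \emph{in particular} both the interior and the exterior of $C$ contain vertices of $G$.'' That ``in particular'' is false in general, because a leaving edge may be a \emph{chord} of $C$, with both endpoints on $C$: then the side containing it contains an edge but need not contain any vertex of $G$. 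Concretely, let $G=K_4$ (plane, cubic, connected) and let $C$ be a Hamiltonian $4$-cycle: $C$ is non-facial, its two leaving edges are chords drawn on opposite sides of $C$, and neither side contains a vertex. Your closing argument (``the interior and exterior parts lie in different components'') is then vacuous, so as written your proof does not establish the claim for such cycles --- even though the claim is true there, since $G-E(C)$ is a perfect matching on at least $4$ vertices and hence disconnected.

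The gap is genuine but easily repaired, in either of two ways. (i) Handle chords directly: if a chord $uv$ of $C$ survives in $G-E(C)$, then both $u$ and $v$ have degree $1$ there and are joined to each other, so $\{u,v\}$ is an entire component of $G-E(C)$, while $C$ has at least one further vertex; hence $G-E(C)$ is disconnected. (ii) Observe that in this application chords cannot occur: a chord of a cycle $C_j$ of $H$ would not lie in $E(H)$ (both its endpoints already have degree $2$ in $H$), so it would be an edge of the Hist $T$ joining two vertices of $V(H)$, i.e.\ two leaves of $T$; but no edge of a tree on at least three vertices joins two leaves. Thus the cycles of $H$ are chordless, every leaving edge ends strictly inside or strictly outside, and your Jordan-curve argument then goes through as intended.
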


\begin{figure}[htpb]
\centering\epsfig{file=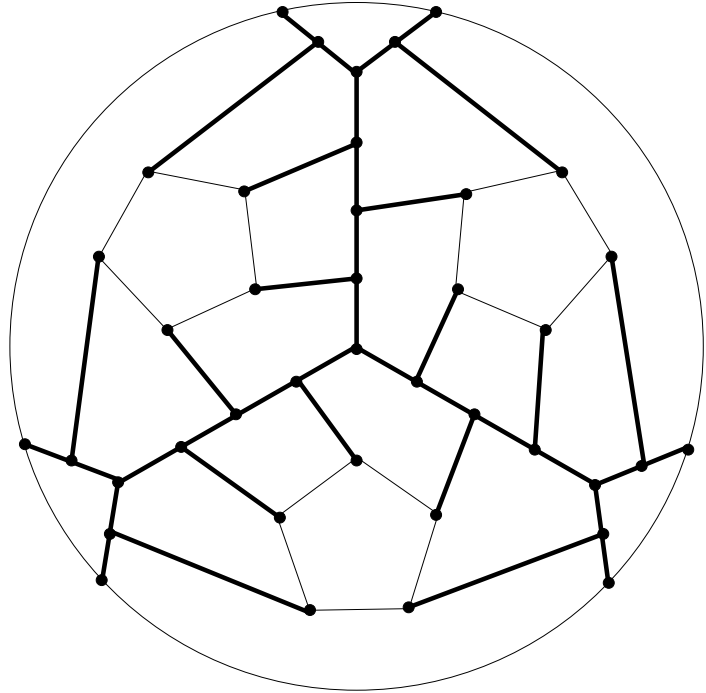, width=3.6 in}
\caption{A fullerene graph with a Hist.} 
\label{fullerene_fig}
\end{figure}

Applying Corollary \ref{pH-graph}, we see for instance that the dodecahedron does not have a Hist, since it has $20$ vertices and every facial cycle has length $5$. 
The dodecahedron belongs to the class of \textit{fullerene graphs},
which are plane $3$-connected cubic graphs with facial cycles of length $5$ and $6$ only, see Figure \ref{fullerene_fig} for an example. Using Corollary \ref{pH-graph} one can prove straightforwardly that other plane cubic graphs, for instance the Buckminster fullerene graph \cite[Figure 9.5.~on p.~211]{GR} and the Grinberg graph \cite[Fig.18.9.~on p.~480]{Bo}, 
do not have a Hist. This should illustrate the usefulness of the above corollary. 
We asked in the first version of this paper whether there are finitely or infinitely many fullerene graphs with a Hist which is answered below.

\begin{thm}
\label{constructionthm}
There are infinitely many fullerene graphs with a Hist.
\end{thm}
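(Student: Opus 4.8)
The plan is to exhibit, for infinitely many values of $n$, a fullerene $F_n$ on $n$ vertices together with an explicit Hist. The starting point is the reformulation supplied by Theorem \ref{general} and Corollary \ref{pH-graph}: for a plane cubic graph $G$ on $n$ vertices, a Hist exists precisely when $G$ carries a non-separating $2$-regular subgraph $H$ that is a disjoint union of facial cycles with $|V(H)| = n/2 + 1$. Indeed, given such an $H$, the graph $T = G - E(H)$ is spanning, every vertex has degree $3$ (if outside $V(H)$) or $1$ (if in $V(H)$, since $H$ is $2$-regular), and $|E(T)| = \tfrac{3}{2}n - |V(H)| = n-1$; being connected (non-separation) with $n-1$ edges, $T$ is a spanning tree, and having no vertex of degree $2$ it is a Hist. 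Thus it suffices to produce, for infinitely many fullerenes, a family of pairwise vertex-disjoint faces whose removal leaves the graph connected and whose vertices number exactly $n/2+1$.

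For the family I would take the tubular (zigzag $(5,0)$-type) fullerenes $F_k$ obtained from two fixed caps, each a hemisphere of the dodecahedron carrying $6$ of the $12$ pentagons and bounded by a zigzag decagon, joined by $k$ consecutive rings of $5$ hexagons arranged with $5$-fold rotational symmetry around a cylinder. These are $3$-connected plane cubic graphs whose only faces are pentagons and hexagons, hence genuine fullerenes; counting faces gives $12 + 5k$ faces and therefore $|V(F_k)| = 20 + 10k$, so the $F_k$ are infinitely many distinct fullerenes (with $F_0$ the dodecahedron). Since $n/2 + 1 = 5k + 11$, the subgraph $H$ must contain, on average, about five vertices per added ring.

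To build $H$ I would use a periodic selection of faces: fix a repeating pattern of period $6$ rings in which exactly $5$ pairwise-disjoint hexagons are placed into $H$ (each contributing $6$ vertices, so the tube averages the required $5$ vertices per ring), together with a bounded correction in one cap. Writing $k = 6m$, this selects one cap-pentagon and $5m+1$ disjoint hexagons, giving $|V(H)| = 5 + 6(5m+1) = 30m + 11 = 5k+11$, exactly as required; since the hexagons are chosen sparsely they are easily kept pairwise vertex-disjoint and facial, so $H$ is $2$-regular. Letting $k$ run over the positive multiples of $6$ keeps infinitely many fullerenes in play. The Hist is then $T = F_k - E(H)$, and one finishes by invoking the reformulation of the first paragraph.

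The main obstacle, and the only place needing genuine care, is the non-separation condition: that $F_k - E(H)$ is connected uniformly in $k$. The interior of the cylinder is routine once the period-$6$ pattern is fixed, since each periodic block is joined to its neighbours by edges outside $H$ and the blocks string together into one connected piece; the two cap–tube junctions and the single selected cap-pentagon are finite, $k$-independent configurations that can be verified once. The delicate points are ensuring the periodic pattern closes up consistently around the cylinder, that no two selected hexagons become adjacent across the seam between consecutive blocks, and that the exact count $5k+11$ is met, which is what forces the residue condition $k \equiv 0 \pmod 6$. Once connectivity is established, acyclicity and the degree conditions follow automatically from the edge count, completing the construction.
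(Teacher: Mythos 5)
Your reduction in the first paragraph is correct and worth keeping: the converse of Theorem \ref{general} does hold for connected cubic graphs (a non-separating $2$-regular subgraph $H$ with $|V(H)|=n/2+1$ yields $T=G-E(H)$ spanning, connected, with $\tfrac{3}{2}n-(n/2+1)=n-1$ edges, hence a spanning tree with degrees $1$ and $3$ only), and your vertex/face counts for the $(5,0)$ nanotube family and the selection $5+6(5m+1)=5k+11$ are arithmetically consistent. This is also a genuinely different route from the paper, which does not work with the complementary $2$-regular subgraph at all: it glues explicitly drawn plane fragments $A$, $H$, $H$, \dots, $H$, $B$ along $18$-cycles and exhibits the Hist directly as a set of bold edges carried through the recursion, so that verifying the Hist property is a finite, picture-level check repeated per copy of $H$.

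However, as it stands your argument has a genuine gap, and it is exactly the one you flag yourself: the construction of $H$ is never actually carried out. You never specify which hexagons in which rings form the ``period-$6$ pattern,'' which cap pentagon is chosen, which extra hexagon provides the ``$+1$,'' or how the pattern meets the two cap--tube junctions; consequently the non-separation of $H$ --- which you correctly identify as the only hard condition, and which is the entire content of the theorem once the reformulation is in place --- is asserted (``routine,'' ``can be verified once'') but never checked for any concrete pattern. Note also that the selection is not as innocuous as the word ``sparsely'' suggests: the chosen faces must cover just over \emph{half} of all vertices of $F_k$, so after deleting $E(H)$ half the vertices become leaves and the uncovered vertices must still induce a connected subgraph (they are forced to be the internal vertices of the tree); this is a delicate global condition, not something that follows from the blocks being ``joined to their neighbours by edges outside $H$.'' Disjointness is indeed easy (e.g.\ selecting hexagons only in pairwise non-consecutive rings, two non-adjacent ones per chosen ring), but until a specific pattern is written down and its complement shown connected uniformly in $k$ --- including at the seams and the caps --- the proof is a plausible plan rather than a proof. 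The paper avoids this issue entirely by drawing the tree itself and checking a bounded number of configurations.
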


\begin{proof}
Let $A$, $B$ and $H$ be the plane graphs shown in Figure \ref{figure1} (every label of a vertex in the figure is shown left above the vertex). By identifying the cycle $C_1$ of $A$ and the cycle $C_2$ of $B$ such that $u$ and $v$ are identified, we obtain a fullerene graph with a Hist which is illustrated in bold edges. In order to construct infinitely many fullerene graphs with Hists, we use the graphs $H_i$ which are defined as follows (during the construction of $H_i$ we keep the bold edges of every copy of $H$ which will then define the edges of the Hist within $H_i$ in the fullerene graph). Firstly, let $H_0$ be a plane cycle of length $18$ and let $H_1\simeq H$.
Then define the graph $H_i$ $(i\ge 2)$ recursively, 
by identifying the outer cycle $C'_1$ in a copy of $H$ and the cycle ($18$-gon) $C'_2$ in $H_{i-1}$ so that $u'$ in $C'_1$ and $v'$ in $C'_2$ are identified.
Now we construct for every nonnegative integer $k$ the fullerene graph $F_k$ with $36k+46$ vertices. We
identify the cycle $C_1$ in $A$ and the outer cycle $C'_1$ in $H_k$
such that $u$ in $C_1$ and $u'$ in $C'_1$ are identified.
Finally, we identify the cycle $C'_2$ in $H_k$ and the outer cycle $C_2$ in $B$
such that $v'$ in $C'_2$ and $v$ in $C_2$ are identified.  
Note that the $12$ shaded faces in Figure \ref{figure1} are pentagons of $F_k$.
It is not difficult to verify that the bold edges in $A$, $B$ and the bold edges of $H_i$ induce a Hist in $F_k$. \qed
\end{proof}

\begin{figure}[h]
 \centering
 \includegraphics[width=16cm]{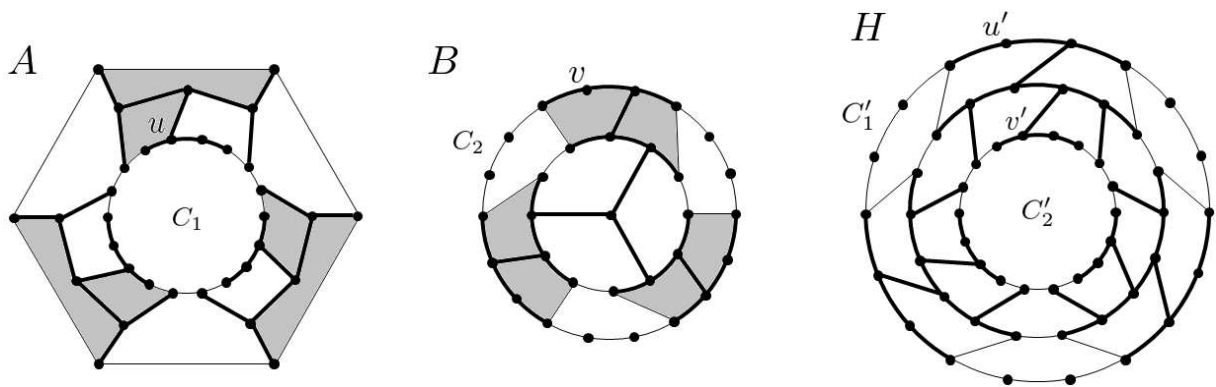}
 \caption{Plane graphs $A$, $B$ and $H$.}
 \label{figure1}
\end{figure}

\textbf{Remark:}
In the proof of Theorem \ref{constructionthm}, every facial cycle of the fullerene graph $F_k$ which is edge-disjoint with the defined Hist has length $6$. In contrast to $F_k$, the fullerene graph in Figure \ref{fullerene_fig} has facial cycles of length $5$ which are edge-disjoint with the illustrated Hist. By computer search, T. Jatschka \cite{J} showed that there are fullerene graphs with Hists with $38$ vertices and that every fullerene graph with less than $38$ vertices does not have a Hist. \\

A class of graphs similar to fullerene graphs are cubic hexangulations.
Recall that a \textit{hexangulation} of a surface is a 2-connected graph with an embedding on the surface such that every facial cycle has length $6$. For example, consider the dual of the triangulation in Figure \ref{hexafig}. Using this type of construction, we see that there are infinitely many bipartite cubic hexangulations $G$ of the torus with $|V(G)| \equiv \ 0  \,\,(\text{mod} \  4)$. Corollary \ref{HIST} directly shows that such hexangulations $G$ do not contain a Hist. 
We asked in the first version of this paper whether there are finitely or infinitely many hexangulations of the torus with a Hist. This question is answered in the next theorem.


\begin{thm}\label{constructhex}
There are infinitely many cubic hexangulations of the torus with a Hist.
\end{thm}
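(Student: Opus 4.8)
The plan is to reduce the existence of a Hist to a local gluing problem and then produce an infinite family by elongating a hexagonal cylinder before closing it into a torus, in the same spirit as the fullerene construction in the proof of Theorem \ref{constructionthm}.

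First I would record the converse of Theorem \ref{general}, which is immediate from the edge count. If $G$ is a cubic graph on $n$ vertices and $H$ is a $2$-regular subgraph (a vertex-disjoint union of cycles) with $|V(H)| = n/2 + 1$ such that $G - E(H)$ is connected, then $G - E(H)$ has exactly $3n/2 - (n/2+1) = n-1$ edges, hence is a spanning tree; every vertex of $V(H)$ has degree $3-2 = 1$ in it and every other vertex has degree $3$, so it is a Hist. Thus it suffices to exhibit, in infinitely many cubic hexangulations of the torus, a non-separating $2$-regular subgraph $H$ covering exactly half the vertices plus one. Phrasing the target this way makes the verification entirely local: one only checks that the prescribed complement edges form disjoint cycles through $2$-valent vertices and that deleting them leaves the graph connected.

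Next I would fix an explicit seed by realizing the torus as a hexagonal cylinder whose two boundary cycles are identified. Concretely, I would take a band of hexagons wrapped into an annulus with two boundary cycles $C_{\mathrm{top}}$ and $C_{\mathrm{bot}}$, decorate it with a periodic pattern of tree edges and complement edges, and identify $C_{\mathrm{top}}$ with $C_{\mathrm{bot}}$ so that every face, including those created along the seam, is a hexagon. The pattern would be chosen so that the complement edges trace out a vertex-disjoint collection of cycles whose total length equals $|V(G)|/2 + 1$ — necessarily a mixture of facial hexagons together with a few non-contractible cycles, since disjoint $6$-cycles alone cannot meet the residue $|V(G)|/2 \equiv 5 \pmod 6$ forced by the count — while the tree edges form a connected spanning subgraph with all non-leaf vertices of degree three. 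To stay clear of the obstruction of Corollary \ref{HIST}, I would keep the seed non-bipartite (equivalently, build in an odd non-contractible cycle), or else arrange $|V(G)| \equiv 2 \pmod 4$. The infinite family then comes from lengthening the cylinder by inserting one more ring of hexagons before closing up: because the decoration is periodic, each inserted ring extends the Hist in a fixed way, adding a constant number of vertices and a corresponding number of complement cycles while preserving both the hexagonal face structure and the degree conditions. Iterating yields cubic hexangulations of the torus of unboundedly many sizes, each carrying a Hist.

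The main obstacle is the seam. One must design the decoration of a single ring so that (i) every face, including the hexagons straddling the identification of $C_{\mathrm{top}}$ and $C_{\mathrm{bot}}$, has length exactly six; (ii) the complement edges meet every leaf in exactly two edges and close up into disjoint cycles rather than paths, so that $H$ is genuinely $2$-regular; and (iii) deleting $E(H)$ keeps the graph connected, i.e.\ $H$ is non-separating, where the contractibility argument available in the planar case of Corollary \ref{pH-graph} no longer applies on the torus. Engineering the counts to hit $|V(H)| = |V(G)|/2 + 1$ exactly, in particular positioning the non-facial cycles that absorb the ``$+1$'' discrepancy, is the delicate bookkeeping that the explicit seed must be built to satisfy; once a single ring passes these checks, the periodicity makes the passage to infinitely many sizes routine.
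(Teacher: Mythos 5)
Your overall strategy matches the paper's: both elongate a toroidal hexangulation by repeatedly inserting an annular ring of hexagons and extend a fixed Hist pattern periodically, and your opening reduction (the converse of Theorem \ref{general}: a non-separating $2$-regular subgraph $H$ with $|V(H)| = |V(G)|/2+1$ yields a Hist by the edge count) is correct and a clean way to make the verification local. However, there is a genuine gap: you never exhibit the seed. The entire mathematical content of the theorem is the existence of an explicit decorated configuration --- a concrete toroidal hexangulation together with a concrete ring and a concrete assignment of tree/complement edges satisfying your conditions (i), (ii), (iii) and the count $|V(H)| = |V(G)|/2+1$ --- and your proposal defers exactly this (``the delicate bookkeeping that the explicit seed must be built to satisfy'') to an object you never construct. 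The paper's proof consists precisely of supplying this data: explicit hexangulations $G_0$, $G_1$ of the torus and an explicit annular hexangulation $T$ (Figure \ref{figure2}), with the Hist of $G_k$ drawn in bold, so that cutting $G_{k-1}$ along a hexagonal cycle $v_1v_2v_3v_4v_5v_6$ and inserting $T$ gives a cubic hexangulation with $12k+10$ vertices whose bold edges again form a Hist. Asserting that a suitable periodic pattern ``would be chosen'' is not a proof that one exists; in particular, ensuring that the inserted ring's tree edges attach to the old tree without creating a cycle and without disconnecting it is nontrivial, and it is exactly what the explicit figures certify.

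A secondary error: your parenthetical claim that $H$ must contain non-contractible cycles because disjoint $6$-cycles ``cannot meet the residue $|V(G)|/2 \equiv 5 \pmod 6$'' is backwards. If $H$ consists solely of facial hexagons, then $6$ divides $|V(H)| = |V(G)|/2+1$, which is exactly the condition $|V(G)| \equiv 10 \pmod{12}$ --- perfectly achievable, since you get to choose $|V(G)|$; indeed the paper's graphs $G_k$ have $12k+10$ vertices. So nothing forces non-facial cycles into $H$. This mistake matters because it steers your construction toward a more complicated (and still unverified) pattern involving non-contractible complement cycles, when the simpler option is available.
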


\begin{proof}
Let $G_0$ and $G_1$ be the hexangulations of the torus
shown in the left and the center of Figure \ref{figure2}. 
(The top and the bottom, the left and the right are identified, respectively.) 
Let $T$ be the hexangulation of the annulus 
shown in the right of Figure \ref{figure2}. 
(The top and the bottom are identified.)  
We construct the cubic hexangulation $G_k$ $(k\ge 2)$ of the torus
recursively, 
by 
(i) cutting $G_{k-1}$ along the cycle $v_1v_2v_3v_4v_5v_6$, and 
(ii) inserting $T$ with appropriate identification. 
Then $G_k$ is a cubic hexangulation with $(12k+10)$ vertices,
and has a Hist,
which is presented in Figure \ref{figure2} by the bold edges. \qed
\end{proof}

\begin{figure}
\centering
{\unitlength 0.1in%
\begin{picture}(12.6000,8.6000)(3.7000,-12.3000)%
%
\special{sh 1.000}%
\special{ia 400 400 30 30 0.0000000 6.2831853}%
\special{pn 20}%
\special{ar 400 400 30 30 0.0000000 6.2831853}%
%
\special{sh 1.000}%
\special{ia 800 400 30 30 0.0000000 6.2831853}%
\special{pn 20}%
\special{ar 800 400 30 30 0.0000000 6.2831853}%
%
\special{sh 1.000}%
\special{ia 800 800 30 30 0.0000000 6.2831853}%
\special{pn 20}%
\special{ar 800 800 30 30 0.0000000 6.2831853}%
%
\special{sh 1.000}%
\special{ia 400 800 30 30 0.0000000 6.2831853}%
\special{pn 20}%
\special{ar 400 800 30 30 0.0000000 6.2831853}%
%
\special{sh 1.000}%
\special{ia 400 1200 30 30 0.0000000 6.2831853}%
\special{pn 20}%
\special{ar 400 1200 30 30 0.0000000 6.2831853}%
%
\special{sh 1.000}%
\special{ia 800 1200 30 30 0.0000000 6.2831853}%
\special{pn 20}%
\special{ar 800 1200 30 30 0.0000000 6.2831853}%
%
\special{sh 1.000}%
\special{ia 1200 1200 30 30 0.0000000 6.2831853}%
\special{pn 20}%
\special{ar 1200 1200 30 30 0.0000000 6.2831853}%
%
\special{sh 1.000}%
\special{ia 1200 800 30 30 0.0000000 6.2831853}%
\special{pn 20}%
\special{ar 1200 800 30 30 0.0000000 6.2831853}%
%
\special{sh 1.000}%
\special{ia 1200 400 30 30 0.0000000 6.2831853}%
\special{pn 20}%
\special{ar 1200 400 30 30 0.0000000 6.2831853}%
%
\special{sh 1.000}%
\special{ia 1600 400 30 30 0.0000000 6.2831853}%
\special{pn 20}%
\special{ar 1600 400 30 30 0.0000000 6.2831853}%
%
\special{sh 1.000}%
\special{ia 1600 800 30 30 0.0000000 6.2831853}%
\special{pn 20}%
\special{ar 1600 800 30 30 0.0000000 6.2831853}%
%
\special{sh 1.000}%
\special{ia 1600 1200 30 30 0.0000000 6.2831853}%
\special{pn 20}%
\special{ar 1600 1200 30 30 0.0000000 6.2831853}%
%
\special{pn 20}%
\special{pa 1600 1200}%
\special{pa 1600 400}%
\special{fp}%
\special{pa 1600 400}%
\special{pa 400 400}%
\special{fp}%
\special{pa 400 400}%
\special{pa 400 1200}%
\special{fp}%
\special{pa 400 1200}%
\special{pa 1600 1200}%
\special{fp}%
\special{pa 1200 1200}%
\special{pa 1200 400}%
\special{fp}%
\special{pa 800 400}%
\special{pa 800 1200}%
\special{fp}%
\special{pa 800 1200}%
\special{pa 1600 400}%
\special{fp}%
\special{pa 1600 800}%
\special{pa 400 800}%
\special{fp}%
\special{pa 400 800}%
\special{pa 800 400}%
\special{fp}%
\special{pa 1200 400}%
\special{pa 400 1200}%
\special{fp}%
\special{pa 1200 1200}%
\special{pa 1600 800}%
\special{fp}%
\end{picture}}%
\caption{The dual of bipartite cubic hexangulations $G$ of the torus
satisfying $|V(G)| \equiv \ 0  \,\,(\text{mod} \  4)$. The top and the bottom, the left and the right are identified, respectively.
}
\label{hexafig}
\end{figure}
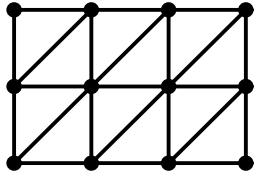

The length of a shortest non-contractible cycle of a graph embedded 
on a non-spherical surface
is called the \textit{edge-width} of the graph.
Note that $G_k$ in the proof of Theorem \ref{constructhex}
is bipartite for every $k\ge 0$,
has girth $6$ and edge-width exactly $6$ for every $k\ge 1$.

After submitting the first version of this paper,
the authors were informed that
Zhai, Wei, He and Ye \cite{WY} also proved Theorem \ref{constructhex},
together with the case of the Klein bottle.
It was also announced that their constructed hexangulations can have arbitrary large edge-width.



\begin{figure}[h]
 \centering
 \includegraphics[width=16cm]{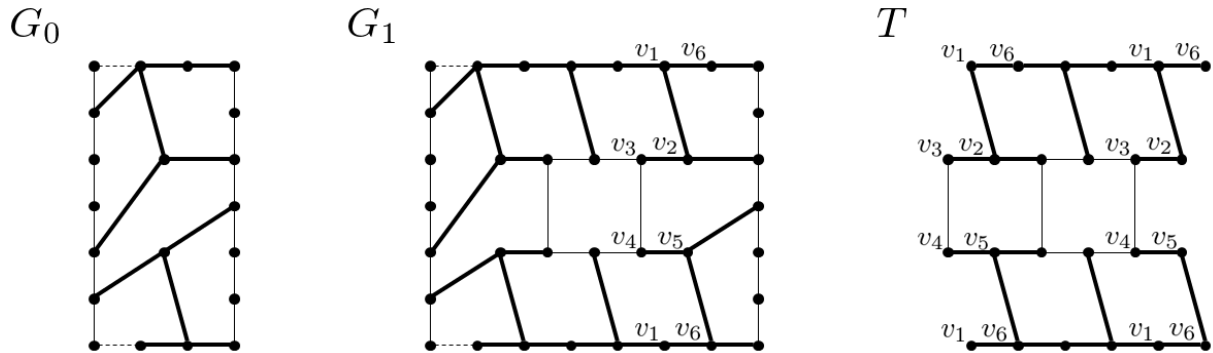}
 \caption{Hexangulations $G_0$, $G_1$ of the torus and $T$ of the annulus.}
 \label{figure2}
\end{figure}

\section*{Acknowledgments}

We are grateful to R.~Nedela, P.~Vr\'{a}na, and T.~Kaiser
for helpful discussions.We also thank Y.~Egawa for several useful comments which shortened the proof of Theorem \ref{inflation}. We are also grateful to the anonymous referees. In particular,
one of the referees suggested a proof of Proposition \ref{constlemma}
using vertex-transitive graphs. Part of the work was done
by the first and third authors during ``8th Workshop on the Matthews-Sumner Conjecture and Related Problems''
in Pilsen. They appreciate the organizers of the workshop for their hospitality.


\end{document}